\newcommand{\Z}{{\textsf{\textup{Z}}}}
\newtheorem{thm}{Theorem}
\newtheorem{cor}[thm]{Corollary}
\newtheorem{defi}[thm]{Definition}
\newtheorem{rem}[thm]{Remark}
\newtheorem{nota}[thm]{Notation}
\newtheorem{princ}[thm]{Principle}
\newcommand\be{\begin{equation}}
\newcommand\ee{\end{equation}} 
\def\bdefi{\begin{defi}\rm}
\def\edefi{\end{defi}}
\def\bnota{\begin{nota}\rm}
\def\enota{\end{nota}}
\def\r{\mathfrak{r}}
\def\FIVE{\Pi_{1}^{1}\text{-\textup{\textsf{CA}}}_{0}}
\def\SIXK{\Pi_{k}^{1}\text{-\textsf{\textup{CA}}}_{0}^{\omega}}
\def\Y{\textup{\textsf{Y}}}
\def\bv{\textup{\textsf{bv}}}
\def\semi{\textup{\textsf{semi}}}
\def\cliq{\textup{\textsf{cliq}}}
\def\ATR{\textup{\textsf{ATR}}}
\def\ZFC{\textup{\textsf{ZFC}}}
\def\ZF{\textup{\textsf{ZF}}}
\def\L{\textsf{\textup{L}}}
\def\RCA{\textup{\textsf{RCA}}}
\def\({\textup{(}}
\def\){\textup{)}}
\def\RCAo{\textup{\textsf{RCA}}_{0}^{\omega}}
\def\ACAo{\textup{\textsf{ACA}}_{0}^{\omega}}
\def\WKL{\textup{\textsf{WKL}}}
\def\WWKL{\textup{\textsf{WWKL}}}
\def\bye{\end{document}}
\def\N{{\mathbb  N}}
\def\Q{{\mathbb  Q}}
\def\R{{\mathbb  R}}
\def\SS{\textup{\textsf{S}}}
\def\di{\rightarrow}
\def\asa{\leftrightarrow}
\def\ACA{\textup{\textsf{ACA}}}
\def\AC{\textup{\textsf{AC}}}
\def\INDY{\textup{\textsf{IND}}_{0}}
\def\Borel{\textup{\textsf{Borel}}}
\def\cocode{\textup{\textsf{cocode}}}
\def\NIN{\textup{\textsf{NIN}}}
\def\NBI{\textup{\textsf{NBI}}}
\def\IND{\textup{\textsf{IND}}}
\def\NFP{\textup{\textsf{NFP}}}
\def\HBU{\textup{\textsf{HBU}}}
\def\eps{\varepsilon}
\def\X{\textup{\textsf{X}}}
\begin{document}
\title{Reverse Mathematics of the uncountability of $\R$\thanks{This research was supported by the \emph{Deutsche Forschungsgemeinschaft} (DFG) via the grant \emph{Reverse Mathematics beyond the G\"odel hierarchy} (SA3418/1-1).  I thank Ulrich Kohlenbach and Dag Normann for all helpful advise regarding Section \ref{basic}.  I also thank the anonymous referees for their many helpful suggestions.}}
%
%
\author{Sam Sanders\inst{1}}
\authorrunning{S.\ Sanders}
%
\institute{Department of Philosophy II, RUB Bochum, Germany \\
\email{sasander@me.com} \\
\url{https://sasander.wixsite.com/academic}}

\setcounter{secnumdepth}{3}
\setcounter{tocdepth}{3}

%
\maketitle              
\begin{abstract}
In his first set theory paper (1874), Cantor establishes the uncountability of $\R$.
We study the latter in Kohlenbach's \emph{higher-order} Reverse Mathematics, motivated by the 
observation that one cannot study concepts like `arbitrary mappings from $\R$ to $\N$' in second-order Reverse Mathematics.
Now, it was recently shown that the statement
\[
\textup{$\NIN:$ \emph{there is no injection from $[0,1]$ to $\N$}}
\]
is \emph{hard to prove} in terms of conventional comprehension.  
In this paper, we show that $\NIN$ is \emph{robust} by establishing equivalences between $\NIN$ and $\NIN$ restricted to mainstream function classes, like: bounded variation, semi-continuity, and Borel.  
Thus, the aforementioned hardness of $\NIN$ is \textbf{not} due to the quantification over \emph{arbitrary} $\R\di \N$-functions in $\NIN$.       
Finally, we also study $\NBI$, the restriction of $\NIN$ to \emph{bijections}, and the connection to Cousin's lemma and Jordan's decomposition theorem.  
\end{abstract}

\section{Introduction and preliminaries}\label{intro}
\subsection{Aim and motivation}\label{krum}

In a nutshell, we study the \emph{the uncountability of $\R$} from the point of view of \emph{Reverse Mathematics}.  
We now explain the aforementioned italicised notions.  

\smallskip

First of all, Reverse Mathematics (RM hereafter) is a program in the foundations of mathematics initiated by Friedman (\cites{fried, fried2}) and developed extensively by Simpson and others (\cites{simpson1, simpson2}); an introduction to RM for the `mathematician in the street' is in \cite{stillebron}.  In a nutshell, RM seeks to identify the minimum axioms needed to prove theorems of ordinary, i.e.\ non-set theoretic, mathematics. 
We assume basic familiarity with RM, including Kohlenbach's \emph{higher-order} RM introduced in \cite{kohlenbach2}, with more recent results -including our own- in \cites{dagsamV, dagsamIII, dagsamX, dagsamVII, dagsamXI, yamayamaharehare}.

\smallskip

Now, the biggest difference between `classical' RM and higher-order RM is that the former makes use of $L_{2}$, the language of \emph{second-order} arithmetic, while the latter uses $L_{\omega}$, the language of \emph{higher-order} arithmetic.  Thus, higher-order objects are only indirectly available via so-called codes or representations in classical RM.  In particular, $L_{2}$ cannot talk about `arbitrary mappings from $\R$ to $\N$'.  Thus, Simpson (only) proves that the real numbers $\R$ cannot be enumerated as a sequence in classical RM (see \cite{simpson2}*{II.4.9}).  Hence, the higher-order RM of the \emph{uncountability of $\R$}, discussed next, is a natural (wide-open) topic of study.  

\smallskip

Secondly, the uncountability of $\R$ was established in 1874 by Cantor in his \emph{first} set theory paper \cite{cantor1}, which even has its own Wikipedia page, namely \cite{wica}.  We will study the uncountability of $\R$ in the guise of the following principles:
\begin{itemize}
\item $\NIN$: \emph{there is no injection from $[0,1]$ to $\N$},
\item  $\NBI$: \emph{there is no bijection from $[0,1]$ to $ \N$}.
\end{itemize}
It was established in \cite{dagsamXI} that $\NIN$ and $\NBI$ are \emph{hard} to prove in terms of (conventional) comprehension, as explained in detail in Remark \ref{proke}.  
One obvious way of downplaying these results is to simply attribute the hardness of $\NIN$ to the fact that one quantifies over \emph{arbitrary} third-order objects, namely $\R\di\N$-functions.  

\smallskip

In this paper, we establish RM-equivalences involving $\NIN$ and $\NBI$, where some are straightforward (Section \ref{basic}) and others advanced or surprising (Section \ref{adv}).  
We also study the connection between $\NIN$ and \emph{Cousin's lemma} and \emph{Jordan's decomposition theorem} (Section \ref{knew}).
In particular, we show that $\NIN$ is equivalent to the statement that there is no injection from $[0,1]$ to $\Q$ that enjoys `nice' mainstream properties 
like \emph{bounded variation}, \emph{semi-continuity}, and related notions.
Hence, the aforementioned hardness of $\NIN$ and $\NBI$ is not due to the latter quantifying over arbitrary third-order functions as \emph{exactly} the same hardness is observed for \emph{mathematically natural} subclasses.  
A recent FOM-discussion initiated by Friedman via \cite{fomo}, brought about this insight, while our results establish that $\NIN$ is \emph{robust} in the sense of Montalb\'an, as follows.
\begin{quote}
[\dots] gaining a greater understanding of [the big five] phenomenon is currently one of the driving questions behind reverse mathematics.
To study the big five phenomenon, one distinction that I think is worth making is the one between robust systems and non-robust systems. A system is \emph{robust} if it is equivalent to small perturbations of itself. This is not a precise notion yet, but we can still recognize some robust systems. All the big five systems are very robust. [...] Apart from those systems, weak weak K\"onig's Lemma ($\WWKL_{0}$) is also robust, and we know no more than one or two other systems that may be robust. (\cite{montahue}*{p.\ 432})
\end{quote} 
Thirdly, as to the structure of this paper, we introduce some essential axioms and definitions in Section \ref{prelim} while our main results may be found in Section \ref{main}.  
We note that some of our results are proved using $\IND_{0}$, a non-trivial fragment of the induction axiom from Section \ref{prelim1}. 
It is a natural RM-question, posed previously by Hirschfeldt (see \cite{montahue}*{\S6.1}), whether these extra axioms are needed for the reversal.
Neeman provides an example of the necessary use of extra induction in a reversal in\cite{neeman}. 
We finish this introductory section with a conceptual remark.  
\begin{rem}[Conventional comprehension]\label{proke}\rm
First of all, the goal of RM is to find the minimal axioms that prove a given theorem.  In second-order RM, these minimal axioms are fragments of the comprehension axiom (and related notions), i.e.\ 
the statement that the set $\{n\in \N: \varphi(n)\}$ exists for a certain class of $L_{2}$-formulas.  Higher-order RM similarly makes use of `comprehension functionals', i.e.\ \emph{third-order} objects
that decide formulas in a certain sub-class of $L_{2}$.  Examples include Kleene's quantifier $\exists^{2}$ and the Suslin functional $\SS^{2}$, to be found in Section \ref{prelim1}.  We are dealing with \emph{conventional} comprehension here, i.e.\
only first- and second-order objects are allowed as parameters.  

\smallskip

Secondly, second-order arithmetic $\Z_{2}$ has two natural higher-order formulations $\Z_{2}^{\omega}$ and $\Z_{2}^{\Omega}$ based on comprehension functionals, both to be found in Section \ref{prelim1}.  
The systems $\Z_{2}$, $\Z_{2}^{\omega}$, and $\Z_{2}^{\Omega}$ prove the same second-order sentences by \cite{hunterphd}*{Cor.\ 2.6}.  Nonetheless, the system $\Z_{2}^{\omega}$ \textbf{cannot} prove $\NIN$ or $\NBI$, while $\Z_{2}^{\Omega}$ proves both.  Here, $\Z_{2}^{\omega}$ and $\NIN$ can be formulated in the language of third-order arithmetic, i.e.\ there is no `type mismatch'.
The previous negative result is why we (feel obliged/warranted to) say that \emph{the principle $\NIN$ is hard to prove in terms of conventional comprehension}.
Finally, $\NIN$ and $\NBI$ seem to be the weakest natural third-order principles with this hardness property.  
\end{rem}

\subsection{Preliminaries}\label{prelim}
We introduce axioms and definitions from RM needed below.  We refer to \cite{kohlenbach2}*{\S2} or \cite{dagsamIII}*{\S2} for Kohlenbach's base theory $\RCAo$, 
and basic definitions like the real numbers $\R$ in $\RCAo$.  As in second-order RM (see \cite{simpson2}*{II.4.4}), real numbers are represented by fast-converging Cauchy sequences.
To avoid the details of coding real numbers and sets, we often assume the axiom $(\exists^{2})$ from Section \ref{prelim1}, which can however sometimes be avoided, as discussed in Remark \ref{LEM}.
\subsubsection{Some axioms of higher-order arithmetic}\label{prelim1}
First of all, the functional $\varphi$ in $(\exists^{2})$ is clearly discontinuous at $f=11\dots$; in fact, $(\exists^{2})$ is equivalent to the existence of $F:\R\di\R$ such that $F(x)=1$ if $x>_{\R}0$, and $0$ otherwise (\cite{kohlenbach2}*{\S3}).  
\be\label{muk}\tag{$\exists^{2}$}
(\exists \varphi^{2}\leq_{2}1)(\forall f^{1})\big[(\exists n)(f(n)=0) \asa \varphi(f)=0    \big]. 
\ee
Related to $(\exists^{2})$, the functional $\mu^{2}$ in $(\mu^{2})$ is also called \emph{Feferman's $\mu$} (\cite{kohlenbach2}).
\begin{align}\label{mu}\tag{$\mu^{2}$}
(\exists \mu^{2})(\forall f^{1})\big[ \big((\exists n)(f(n)=0) \di [f(\mu(f))=0&\wedge (\forall i<\mu(f))(f(i)\ne 0) ]\big)\\
& \wedge [ (\forall n)(f(n)\ne0)\di   \mu(f)=0]    \big].\notag
\end{align}
Intuitively, $\mu^{2}$ is the least-number-operator, i.e.\ $\mu(f)$ provides the least $n\in \N$ such that $f(n)=0$, if such number exists.  
We have $(\exists^{2})\asa (\mu^{2})$ over $\RCAo$ and $\ACAo\equiv\RCAo+(\exists^{2})$ proves the same $L_{2}$-sentences as $\ACA_{0}$ by \cite{hunterphd}*{Theorem~2.5}.
Working in $\ACAo$, one readily defines a
functional $\eta:[0,1]\di 2^{\N}$ that converts real numbers to their\footnote{In case there are two binary representations, we choose the one with a tail of zeros.} binary representation. 

\smallskip

Secondly, we sometimes need more induction than is available in $\RCAo$. 
The connection between `finite comprehension' and induction is well-known from second-order RM (see \cite{simpson2}*{X.4.4}).
\begin{princ}[$\INDY$]\label{bcktre}
Let $Y^{2}$ satisfy $(\forall n\in \N)(\exists \textup{ at most one } f\in 2^{\N})(Y(f, n)=0)$.  
For $k\in \N$, there is $w^{1^{*}}$ such that for any $m\leq k$, we have
\[
(\exists i<|w|)((w(i)\in 2^{\N}\wedge Y(w(i), m)=0) )\asa (\exists f\in 2^{\N})(Y(f, m)=0).
\]
\end{princ}
Thirdly, \emph{the Suslin functional} $\SS^{2}$ is defined in \cite{kohlenbach2} as follows:
\be\tag{$\SS^{2}$}
(\exists\SS^{2}\leq_{2}1)(\forall f^{1})\big[  (\exists g^{1})(\forall n^{0})(f(\overline{g}n)=0)\asa \SS(f)=0  \big].
\ee
The system $\FIVE^{\omega}\equiv \RCAo+(\SS^{2})$ proves the same $\Pi_{3}^{1}$-sentences as $\FIVE$ by \cite{yamayamaharehare}*{Theorem 2.2}.   
By definition, the Suslin functional $\SS^{2}$ can decide whether a $\Sigma_{1}^{1}$-formula as in the left-hand side of $(\SS^{2})$ is true or false.   We similarly define the functional $\SS_{k}^{2}$ which decides the truth or falsity of $\Sigma_{k}^{1}$-formulas from $\L_{2}$; we also define 
the system $\SIXK$ as $\RCAo+(\SS_{k}^{2})$, where  $(\SS_{k}^{2})$ expresses that $\SS_{k}^{2}$ exists.  
We note that the operators $\nu_{n}$ from \cite{boekskeopendoen}*{p.\ 129} are essentially $\SS_{n}^{2}$ strengthened to return a witness (if existant) to the $\Sigma_{n}^{1}$-formula at hand.  

\smallskip

\noindent
Finally, second-order arithmetic $\Z_{2}$ readily follows from $\cup_{k}\SIXK$, or from:
\be\tag{$\exists^{3}$}
(\exists E^{3}\leq_{3}1)(\forall Y^{2})\big[  (\exists f^{1})(Y(f)=0)\asa E(Y)=0  \big], 
\ee
and we therefore define $\Z_{2}^{\Omega}\equiv \RCAo+(\exists^{3})$ and $\Z_{2}^\omega\equiv \cup_{k}\SIXK$, which are conservative over $\Z_{2}$ by \cite{hunterphd}*{Cor.\ 2.6}. 
Despite this close connection, $\Z_{2}^{\omega}$ and $\Z_{2}^{\Omega}$ can behave quite differently, as discussed in Remark \ref{proke}.
The functional from $(\exists^{3})$ is also called `$\exists^{3}$', and we use the same convention for other functionals.

\subsubsection{Some basic definitions}\label{prelim2}
We introduce the higher-order definitions of `set' and `countable', as can be found in e.g.\ \cite{dagsamX, dagsamVII, dagsamXI}. 

\smallskip

First of all, open sets are represented in second-order RM as countable unions of basic open sets (\cite{simpson2}*{II.5.6}), and we refer to such sets as `RM-open'.
By \cite{simpson2}*{II.7.1}, one can effectively convert between RM-open sets and (RM-codes for) continuous characteristic functions.
Thus, a natural extension of the notion of `open set' is to allow \emph{arbitrary} (possibly discontinuous) characteristic functions, as is done in e.g.\ \cite{dagsamVII, dagsamX}.  
To make sure (basic) RM-open sets have characteristic functions, we shall always assume $\ACAo$ when necessary.
\bdefi[Subsets of $\R$]\label{openset}
We let $Y: \R \di \{0,1\}$ represent subsets of $\R$ as follows: we write `$x \in Y$' for `$Y(x)=1$'.
\edefi
The notion of `subset of $2^{\N}$ or $\N^{\N}$' now has an obvious definition. 
Having introduced our notion of set, we now turn to countable sets.
\bdefi[Enumerable sets of reals]\label{eni}
A set $A\subset \R$ is \emph{enumerable} if there exists a sequence $(x_{n})_{n\in \N}$ such that $(\forall x\in \R)(x\in A\asa (\exists n\in \N)(x=_{\R}x_{n}))$.  
\edefi
This definition reflects the RM-notion of `countable set' from \cite{simpson2}*{V.4.2}.  Note that given Feferman's $\mu^{2}$, we can remove all elements from a sequence of reals $(x_{n})_{n\in \N}$ that are not in a given set $A\subset \R$.  

\smallskip
\noindent
The definition of `countable set of reals' is now as follows in $\RCAo$, while the associated definitions for Baire space are obvious. 
\bdefi[Countable subset of $\R$]\label{standard}~
A set $A\subset \R$ is \emph{countable} if there exists $Y:\R\di \N$ such that $(\forall x, y\in A)(Y(x)=_{0}Y(y)\di x=_{\R}y)$. 
The functional $Y$ is called \emph{injective} on $A$ or \emph{an injection} on $A$.
If $Y:\R\di \N$ is also \emph{surjective}, i.e.\ $(\forall n\in \N)(\exists x\in A)(Y(x)=n)$, we call $A$ \emph{strongly countable}.
The functional $Y$ is then called \emph{bijective} on $A$ or \emph{a bijection} on $A$.
\edefi
The first part of Definition \ref{standard} is from Kunen's set theory textbook (\cite{kunen}*{p.~63}) and the second part is taken from Hrbacek-Jech's set theory textbook \cite{hrbacekjech} (where the term `countable' is used instead of `strongly countable').  According to Veldman (\cite{veldje2}*{p.\ 292}), Brouwer studied set theory based on injections.
Hereafter, `strongly countable' and `countable' shall exclusively refer to Definition~\ref{standard}.  

\smallskip

Finally, note that the principles $\NIN$ and $\NBI$ from Section \ref{intro} have now been defined. 
We have previously studied the RM of $\cocode_{i}$ for $i=0,1$ in \cite{dagsamXI, dagsamX}, where the index $i=0$ expresses that a countable set in the unit interval can be enumerated (for $i=1$, we restrict to strongly countable sets).

\section{Main results}\label{main}
We establish the results sketched in Section \ref{krum}.
We generally assume $(\exists^{2})$ from Section \ref{prelim1} to avoid the technical details involved in the representation of sets and real numbers.  
Given that $\NIN$ cannot be proved in $\Z_{2}^{\omega}$ by Remark \ref{proke}, this seems like a weak assumption.  

\subsection{Basic robustness results}\label{basic}
In this section, we show that $\NIN$, $\NBI$, and related principles are relatively robust when it comes to the domain of the mappings therein.  

\smallskip

First of all, let $\NIN^{\X}$ express that there is no injection $Y:\X\di \N$, for $\X$ equal to either the reals $\R$, Cantor space $2^{\N}$ (also denoted as $C$), or Baire space $\N^{\N}$.  
\begin{thm}\label{nintsel}
The system $\ACAo$ proves $\NIN\asa \NIN^{C}\asa \NIN^{\N^{\N}}\asa \NIN^{\R}$.  
\end{thm}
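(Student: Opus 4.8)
The plan is to reduce every equivalence to the construction of explicit \emph{injections between the underlying spaces}. Concretely, if $j\colon \X'\di \X$ is an injection definable in $\ACAo$ and $Y\colon \X\di \N$ is any (possibly discontinuous) injection, then $\lambda z.\,Y(j(z))\colon \X'\di \N$ is again an injection; the axiom $(\exists^{2})$ guarantees that this composition is a genuine functional and lets us decide the real-number (in)equalities involved. Taking contrapositives, a definable injection $\X'\di \X$ yields the implication $\NIN^{\X'}\di \NIN^{\X}$ (reading $[0,1]$ and $\NIN$ for the interval case). Hence it suffices to exhibit, for each adjacent pair in the chain, a definable injection in \emph{both} directions.

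First I would treat $\NIN\asa \NIN^{\R}$. The inclusion $[0,1]\subseteq \R$ is the injection giving $\NIN\di \NIN^{\R}$, while for the converse the standard map $x\mapsto \tfrac12\big(1+\tfrac{x}{1+|x|}\big)$ is an injection $\R\di (0,1)\subseteq [0,1]$, continuous and hence unproblematically definable, yielding $\NIN^{\R}\di \NIN$. Next, for $\NIN\asa \NIN^{C}$ I would use the functional $\eta\colon [0,1]\di 2^{\N}$ from Section~\ref{prelim1}, which is injective (distinct reals have distinct binary expansions) and available in $\ACAo$, to get $\NIN\di \NIN^{C}$. For the reverse I would use the base-$3$ Cantor embedding $c\colon 2^{\N}\di [0,1]$ given by $c(f)=\sum_{n}\frac{2f(n)}{3^{n+1}}$: it is a fast-converging Cauchy sequence uniformly in $f$, and it is injective because sequences $f,g$ first differing at position $n$ satisfy $|c(f)-c(g)|\ge \frac{1}{3^{n+1}}>0$, giving $\NIN^{C}\di \NIN$.

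Finally, for $\NIN^{C}\asa \NIN^{\N^{\N}}$ the inclusion $2^{\N}\subseteq \N^{\N}$ handles $\NIN^{C}\di \NIN^{\N^{\N}}$, while the unary-with-separators encoding $d\colon \N^{\N}\di 2^{\N}$ sending $f$ to $0^{f(0)}10^{f(1)}10^{f(2)}1\cdots$ is a primitive-recursive injection handling $\NIN^{\N^{\N}}\di \NIN^{C}$. Since $\asa$ is an equivalence relation, these three two-sided equivalences establish the full chain $\NIN\asa \NIN^{C}\asa \NIN^{\N^{\N}}\asa \NIN^{\R}$; a direct link $\NIN^{\N^{\N}}\asa \NIN^{\R}$ may also be read off by composing the maps above (e.g.\ $c\circ d$ and $\eta\circ j$).

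The bookkeeping is routine; the only genuine obstacle is verifying injectivity \emph{as statements about real numbers} for $\eta$ and $c$, and confirming that each such map, together with its composition with an arbitrary third-order injection $Y$, is legitimately definable in $\ACAo$. This is precisely why the blanket assumption $(\exists^{2})$, and the $\ACAo$-definability of $\eta$, are invoked throughout.
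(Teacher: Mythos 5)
Your proposal is correct, and its overall architecture -- a chain of explicit injections between the underlying spaces, composed with an arbitrary injection $Y$ into $\N$, with $(\exists^{2})$ handling definability -- is exactly the strategy the paper uses implicitly; in particular your treatment of $\NIN\asa\NIN^{\R}$ (via $\frac{1}{2}(1+\frac{x}{1+|x|})$), of $\NIN\di\NIN^{C}$ (via $\eta$), and of $\NIN^{C}\di\NIN^{\N^{\N}}$ (inclusion) coincides with the paper's. The genuine divergence is in the direction $\NIN^{C}\di\NIN$: the paper works with the canonical surjection $\r(f)=\sum_{n}f(n)2^{-n-1}$, which is \emph{not} injective because of reals with two binary representations, and so must patch this by enumerating the binary-ambiguous rationals $(q_{n})_{n\in\N}$ and defining $Y(f)$ by cases as $3Z(\r(f))$, $3n+1$, or $3n+2$ according to tails of zeros or ones. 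You sidestep this entirely with the middle-thirds embedding $c(f)=\sum_{n}\frac{2f(n)}{3^{n+1}}$, which is injective outright (your bound $|c(f)-c(g)|\geq 3^{-(n+1)}$ is correct), so this direction becomes just another instance of your composition lemma. Your route buys uniformity and avoids the representation bookkeeping -- indeed $c$ is even $\RCAo$-definable -- while the paper's route reuses the map $\r$ already fixed in its preliminaries. Your coding of $\N^{\N}$ into $2^{\N}$ by unary blocks with separators also differs superficially from the paper's coding via characteristic functions of graphs, but both are standard injections and serve identically.
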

\begin{proof}
First of all, $\NIN\di \NIN^{\R}$ and $\NIN^{C}\di \NIN^{\N^{\N}}$ are trivial, while $\NIN^{\R}\di \NIN$ follows by considering the injection $\frac{1}{2}(1+\frac{x}{1+|x|})$ from $\R$ to $(0,1)$.

\smallskip

Secondly, assume $\NIN$ and use the usual interval-halving technique (using $\exists^{2}$) to obtain $\eta:[0,1]\di 2^{\N}$ such that $\eta(x)$ is the binary representation of $x\in [0,1]$, choosing a tail of zeros in the non-unique case.  
Fix $Y:2^{\N}\di \N$ and define $Z:[0,1]\di \N$ as $Z(x):= Y(\eta(x))$, which satisfies the axiom of extensionality\footnote{Functions $F:\R\di \R$ are represented by $\Phi:\N^{\N}\di \N^{\N}$ mapping equal reals to equal reals, i.e.\ extensionality as in $(\forall x , y\in \R)(x=_{\R}y\di \Phi(x)=_{\R}\Phi(y))$ (see \cite{kohlenbach2}*{p.\ 289}).} on $\R$ by definition.  
By $\NIN$, there are $x, y\in[0,1]$ with $x\ne_{\R}y$ and $Z(x)=Z(y)$.  Clearly, $\eta(x)\ne_{1} \eta(y)$ and $Y(\eta(x))=Y(\eta(y))$, and $\NIN^{C}$ follows.  

\smallskip

Thirdly, assume $\NIN^{C}$, fix $Z:[0,1]\di \N$ and let $(q_{n})_{n\in \N}$ be a list of all rational numbers with non-unique binary representation.  
Define $Y:2^{\N}\di \N$ as follows:  $Y(f):= 3 Z(\r(f)) $ in case $\r(f):=\sum_{n=0}^{\infty}\frac{f(n)}{2^{n+1}}$ has a unique binary representation, $Y(f):=3n+1$ in case $\r(f)=q_{n}$ and $f$ has a tail of zeros, and $Y(f)=3n+2$ in case $\r(f)=q_{n}$ and $f$ has a tail of ones. 
By $\NIN^{C}$, there are $f, g\in 2^{\N}$ such that $f\ne_{1} g$ and $Y(f)=Y(g)$.  Clearly, this is only possible in the first case of the definition of $Z$, i.e.\ we have $Y(f)=3Z(\r(f))=3Z(\r(g))=Y(g)$.  
Since also $\r(f)\ne_{\R}\r(g)$, $\NIN$ follows and we obtain $\NIN\asa \NIN^{C}$.

\smallskip

Finally, let $Y:2^{\N}\di \N$ be an injection.  
For $f\in \N^{\N}$, define its graph $X_{f}:=\{(n, f(n)):n \in \N\} $ in $\N^{2}$ and code the latter as a binary sequence $\tilde{X_{f}}$.   Note that $f(n):= (\mu m)[(n,m )\in X_{f}]$ recovers the function $f$ from its graph $X_{f}$.
Modulo this coding, define $Z:\N^{\N}\di \N$ as $Z(f):=Y(\tilde{X_{f}})$.  By the assumption on $Y$, $Z(f)=_{0}Z(g)$ for $f, g\in \N^{\N}$ implies $\tilde{X_{f}}=_{1}\tilde{X_{g}}$, which implies $f=_{1}g$, by the definition of $X_{f}$.  
Hence, $\neg\NIN^{C}\di \neg\NIN^{\N^{\N}}$, and we are done.    
\qed
\end{proof}
Similarly, $\cocode_{0}^{\X}$ is the statement that any countable subset of $\X$ can be enumerated, while $\cocode_{1}^{\X}$ is the restriction to strongly countable sets.  
\begin{thm}[$\ACAo$]\label{restofthebest}
For $i=0,1$, we have $\cocode_{i}\asa \cocode_{i}^{\R}\asa \cocode_{i}^{C}$.  
\end{thm}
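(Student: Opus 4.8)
The plan is to establish the two equivalences $\cocode_{i}\asa\cocode_{i}^{\R}$ and $\cocode_{i}\asa\cocode_{i}^{C}$ separately; chaining them yields $\cocode_{i}^{\R}\asa\cocode_{i}^{C}$ as required. Throughout we work in $\ACAo\equiv\RCAo+(\exists^{2})$, using $(\exists^{2})$ (equivalently Feferman's $\mu$) to decide membership in $[0,1]$, to read off binary and ternary expansions, and to thin out sequences; recall that sets are given by their characteristic functions (Definition~\ref{openset}). The recurring device is that a (strongly) countable set in one space is transported to a (strongly) countable set in the other along an \emph{injective} encoding, its enumeration is obtained from the relevant hypothesis, and then pulled back; the point to watch for $i=1$ is that the encoding must send the witnessing \emph{bijection} to a bijection.

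For $\cocode_{i}\asa\cocode_{i}^{\R}$ I would reuse the homeomorphism $h(x):=\tfrac{1}{2}(1+\tfrac{x}{1+|x|})$ from $\R$ onto $(0,1)$ appearing in the proof of Theorem~\ref{nintsel}, together with its (computable) inverse. Given (strongly) countable $A\subseteq\R$ with witness $Y:\R\di\N$, the image $h(A)\subseteq(0,1)\subseteq[0,1]$ is countable via $Y\circ h^{-1}$, which remains surjective onto $\N$ when $Y$ is; applying $\cocode_{i}$ to $h(A)$ and pushing the resulting enumeration back through $h^{-1}$ enumerates $A$, giving $\cocode_{i}\di\cocode_{i}^{\R}$. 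Conversely, a (strongly) countable $A\subseteq[0,1]$ is, via its witness (extended by $0$ outside $[0,1]$ if needed), a (strongly) countable subset of $\R$, so $\cocode_{i}^{\R}$ enumerates it; this yields $\cocode_{i}^{\R}\di\cocode_{i}$.

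For the direction $\cocode_{i}^{C}\di\cocode_{i}$ I would use the injective map $\eta:[0,1]\di 2^{\N}$ from Theorem~\ref{nintsel} (binary expansion with a tail of zeros), which satisfies $\r\circ\eta=\textup{id}$ for $\r(f):=\sum_{n}f(n)/2^{n+1}$. For (strongly) countable $A\subseteq[0,1]$ with witness $Y$, the set $\eta(A)\subseteq 2^{\N}$ is (strongly) countable via $f\mapsto Y(\r(f))$, so $\cocode_{i}^{C}$ enumerates $\eta(A)$, and applying $\r$ termwise enumerates $A$.

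The main obstacle is the remaining direction $\cocode_{i}\di\cocode_{i}^{C}$, because the natural surjection $\r:2^{\N}\di[0,1]$ is \emph{not} injective on the (dyadic) sequences with a tail of zeros or ones: transporting a set of reals along $\r$ would both fail to recover the two preimages of a dyadic point and, for $i=1$, destroy surjectivity of the witness. To avoid this I would instead embed $2^{\N}$ into $[0,1]$ injectively by the base-$3$ (Cantor-set) map $c(f):=\sum_{n}2f(n)/3^{n+1}$, whose range-restricted inverse is computable from $(\exists^{2})$. Given (strongly) countable $A\subseteq 2^{\N}$ with witness $Y$, the image $c(A)\subseteq[0,1]$ is (strongly) countable via $Y\circ c^{-1}$---here injectivity of $c$ is exactly what keeps $Y\circ c^{-1}$ a bijection when $Y$ is one---so $\cocode_{i}$ enumerates $c(A)$ and decoding each term by $c^{-1}$ enumerates $A$. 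I expect the only real work to be the routine verification that $c$, $c^{-1}$, and the relevant characteristic functions are definable in $\ACAo$, and the bookkeeping checking that surjectivity of witnesses is preserved throughout (the $i=1$ clauses). Note that, unlike Theorem~\ref{nintsel}, Baire space is not part of the statement, so no graph-coding step is needed.
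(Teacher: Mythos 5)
Your proposal is correct, and on two of the three fronts it coincides with the paper's own argument: the equivalence $\cocode_{i}\asa\cocode_{i}^{\R}$ via a bijection between $\R$ and $(0,1)$ with computable inverse (the paper uses rescaled arctangens, you use $x\mapsto\frac{1}{2}(1+\frac{x}{1+|x|})$), and the direction $\cocode_{i}^{C}\di\cocode_{i}$ via the pushforward witness $f\mapsto Y(\r(f))$ on $\eta(A)$, exactly as in the paper. The genuine difference is the remaining direction $\cocode_{i}\di\cocode_{i}^{C}$. The paper transports $A\subseteq 2^{\N}$ to $B=\{\r(f):f\in A\}\subseteq[0,1]$ along the \emph{non-injective} map $\r$ and then repairs the damage at dyadic reals: for $i=0$ it uses $\mu^{2}$ to separately enumerate those $f\in A$ for which $\r(f)$ has a non-unique binary representation and splices that list into the one obtained from $\cocode_{0}$; for $i=1$, where discarding or double-counting such $f$ would destroy surjectivity of the witness, it instead proves $\cocode_{1}^{\R}\di\cocode_{1}^{C}$ by planting the exceptional elements $f_{n}$ into the disjoint intervals $(n+1,n+2]$ and applying the $\R$-version to the resulting set $D\subset\R$. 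Your Cantor middle-thirds embedding $c(f):=\sum_{n}2f(n)/3^{n+1}$ is injective, so it needs no exception handling at all: it treats $i=0$ and $i=1$ uniformly, automatically sends bijective witnesses to bijective witnesses, and avoids both the $\mu^{2}$-splicing and the spreading construction; the price is only the routine verification, noted in your last paragraph, that membership in the Cantor set is decidable and $c^{-1}$ computable from $\exists^{2}$ in $\ACAo$. Your route even sidesteps a subtlety that the paper's $i=0$ argument glosses over: the pushforward witness $Z(x):=Y(\eta(x))$ need not be injective on $B$, since for a dyadic $x=\r(f)$ with $f\in A$ having a tail of ones, $\eta(x)$ may lie outside $A$, where $Y$ is uncontrolled and collisions are possible (this is fixable by a case-split on whether $\eta(x)\in A$, but the paper does not say so); your injective encoding rules out such collisions by construction.
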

\begin{proof}
The implication $\cocode_{i}^{\R}\di \cocode_{i}$ is trivial while the (rescaled) arctangent function is a bijection from $\R$ to $(0,1)$, which readily yields the reversal. 

\smallskip

Now assume $\cocode_{0}^{C}$ and let $Z:[0,1]\di \N$ be injective on $A\subset [0,1]$.     
The functional $Y:2^{\N}\di \N$ defined by $Y(f):=Z(\r(f))$ is clearly injective on $B:=\{\eta(x):x\in A\}$ where $\eta$ is as in the proof of Theorem \ref{nintsel}.  
Let $(f_{n})_{n\in \N}$ be a list of all elements in $B$ and note that $(\r(f_{n}))_{n\in \N}$ is a list of all elements in $A$, i.e.\ $\cocode_{0}$ follows. 
Note that if $Z$ is bijective on $A$, then $Y$ is bijective on $B$ by definition, i.e.\ $\cocode_{1}^{C}\di \cocode_{1}$.  

\smallskip

Next, assume $\cocode_{0}$, let $Y:2^{\N}\di \N$ be injective on $A\subset 2^{\N}$, and define $Z(x):=Y(\eta(x))$.  
Then $Z:[0,1]\di \N$ witnesses that $B=\{ \r(f): f\in A\}$ is countable, and let $(x_{n})_{n\in \N}$ be an enumeration of $B$.
This list is readily converted to a list of all elements in $A$ via $\eta$ and by noting that $\mu^{2}$ can list all $f\in A$ such that $\r(f)$ has a non-unique binary representation; we thus have $\cocode_{0}^{C}$.

\smallskip

We now prove $\cocode_{1}^{\R}\di \cocode_{1}^{C}$.
Let $Y:2^{\N}\di \N$ be bijective on $A\subset 2^{\N}$ and let $(f_{n})_{n\in \N}$ be the list of all $f\in A$ such that $\r(f)$ has a non-unique binary representation.  
Now define $D\subset \R$ as: $x\in D$ if either of the following holds: 
\begin{itemize}
\item $x\in [0,1]$, $x$ has a unique binary representation, and $\eta(x)\in A$, 
\item there is $n\in \N$ with $x\in (n,+1, n+2] $ and $x-(n+1)=_{\R}\r(f_{n})$.
\end{itemize}
Define $W:\R\di \N$ as $W(x):= Y(\eta(x))$ if $x\in [0,1]$ and $W(x):= Y(f_{n})$ in case $|x|\in (n+1, n+2]$ as in the second case of the definition of $D$.  
Then $W$ is a bijection on $D$ since $Y$ is a bijection on $A$.  The list provided by $\cocode_{1}^{\R}$ for $D$ now readily yields the list required for $A$ as in $\cocode_{1}^{C}$.  
\qed\end{proof}
\noindent
Finally, $\NBI^{\X}$ is the statement that there is no bijection from $\X$ to $\N$, where $\X$ is e.g.\ $\R$ or $\N^{\N}$. 
We have the following theorem. 
\begin{thm}\label{hisproof}
The system $\ACAo$ proves $\NBI\asa \NBI^{\R}$ and $\NBI\di \NBI^{\N^{\N}}$.
\end{thm}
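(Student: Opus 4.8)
The plan is to treat the two claims separately, in each case arguing by contraposition and building the required bijection explicitly, using Feferman's $\mu^{2}$ (available since $\ACAo\equiv \RCAo+(\exists^{2})$) to carry out Hilbert-hotel corrections over \emph{enumerable} exceptional sets.

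For $\NBI\asa \NBI^{\R}$, I would use the bijection $g(x):=\tfrac{1}{2}(1+\tfrac{x}{1+|x|})$ from $\R$ onto $(0,1)$ and its inverse, both computable from $(\exists^{2})$. For $\NBI\di \NBI^{\R}$, given a bijection $W:\R\di \N$, the map $W\circ g^{-1}:(0,1)\di \N$ is a bijection, which I extend to $[0,1]$ by $Z(0):=0$, $Z(1):=1$, and $Z(x):=(W\circ g^{-1})(x)+2$ on $(0,1)$. For the reversal, given a bijection $Z:[0,1]\di \N$, the map $Z\circ g:\R\di \N$ is an injection omitting exactly the two values $Z(0),Z(1)$, so composing with the (computable) order-isomorphism $\N\setminus\{Z(0),Z(1)\}\di \N$ gives a bijection $\R\di \N$. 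Since only a two-point correction is involved, this part is routine.

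The substance is $\NBI\di \NBI^{\N^{\N}}$, which I again prove contrapositively: from a bijection $W:\N^{\N}\di \N$ I produce a bijection $[0,1]\di \N$. The idea is to manufacture, \emph{inside} $\ACAo$, two explicit bijections, namely $b:2^{\N}\di [0,1]$ and $c:2^{\N}\di \N^{\N}$. For $b$ I start from $\r(f)=\sum_{n}f(n)2^{-n-1}$, which maps onto $[0,1]$ and is injective off the enumerable set $T$ of eventually-constant binary sequences (the two representations of the dyadic rationals); since $T$ and the set of dyadics are enumerable, $\mu^{2}$ lets me repair the collapsing on $T$ and obtain a genuine bijection $b$ (with $b^{-1}$ given by $\eta$ off the dyadics and by the repair on them). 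For $c$ I use the block code sending $g\in 2^{\N}$ with infinitely many $1$'s to its sequence of gap-lengths between consecutive $1$'s; this is a bijection onto $\N^{\N}$ off the enumerable set of tail-of-zeros sequences, which I patch by a $\mu^{2}$-Hilbert-hotel along a fixed enumerable sequence in $\N^{\N}$. Composing, $Z:=W\circ c\circ b^{-1}:[0,1]\di \N$ is a bijection, refuting $\NBI$; here one checks extensionality of $Z$ on $\R$, which holds since $b^{-1}$ is assembled from $\eta$ and the enumerable repairs, all respecting $=_{\R}$.

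The main obstacle---and the reason one must build the honest bijection $c:2^{\N}\di \N^{\N}$ rather than use the naive inclusion $2^{\N}\hookrightarrow \N^{\N}$---is that the naive composite $W\circ \eta:[0,1]\di \N$ is injective but misses the set $W[\N^{\N}\setminus 2^{\N}]$, and deciding membership in this set would amount to inverting $W$, i.e.\ deciding the $\Sigma_{1}^{1}$-type predicate $(\exists f\in 2^{\N})(W(f)=n)$, which is \emph{not} available from $(\exists^{2})$ alone (it needs the Suslin functional). Routing through a genuine bijection $2^{\N}\di \N^{\N}$ sidesteps this: every correction is along an enumerable set and hence performable with $\mu^{2}$, keeping the whole construction inside $\ACAo$. (As a byproduct, since $b$ and $c$ are explicitly invertible, the same maps would also yield $\NBI^{\N^{\N}}\di \NBI$, though only the stated implication is needed here.)
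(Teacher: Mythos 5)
Your proposal is correct, but for the substantive implication $\NBI\di\NBI^{\N^{\N}}$ it takes a genuinely different route from the paper. For $\NBI\asa\NBI^{\R}$ the paper does essentially what you do, using rescaled tangens/arctangens instead of your algebraic map (and it silently elides the two-point endpoint correction between $(0,1)$ and $[0,1]$ that you spell out). For the Baire-space implication, however, the paper avoids Cantor space entirely: it invokes the generalised continued fraction expansion of Lorentzen--Waadeland, which gives a single (continuous) map that is \emph{already} an honest bijection from $\N^{\N}$ onto $\R_{\geq 0}$, and then composes with $\NBI\asa\NBI^{\R_{\geq 0}}$; no Hilbert-hotel repairs or case distinctions occur. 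Your construction replaces this analytic input with elementary combinatorics: two codings through $2^{\N}$ (binary expansion onto $[0,1]$, gap-length coding onto $\N^{\N}$), each repaired along an enumerable exceptional set by a $\mu^{2}$-Hilbert-hotel, with all case distinctions arithmetical and hence decidable from $(\exists^{2})$. The paper's route buys brevity and the absence of patching, at the price of accepting that continued-fraction theory (including bijectivity of the expansion) formalises in $\ACAo$; your route is self-contained and, since your $b$ and $c$ are invertible within $\ACAo$, actually yields the stronger equivalence $\NBI\asa\NBI^{\N^{\N}}$ (which the paper's bijection would also give, though its theorem claims only one direction). Your diagnosis of why the naive inclusion $2^{\N}\hookrightarrow\N^{\N}$ fails -- recognising the range of $W$ is a $\Sigma_{1}^{1}$-type problem not decidable from $(\exists^{2})$ alone -- is correct and is a point the paper does not make explicit.
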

\begin{proof}
The implication $\NBI\di \NBI^{\R}$ is immediate as the (rescaled) tangent function provides a bijection from $(0,1)$ to $\R$.  
The inverse of tangent, called \emph{arctangent}, yields a bijection in the other direction (also with rescaling), i.e.\ the first equivalence 
is immediate, as well as $\NBI\asa \NBI^{\R_{\geq_{0}}}$.
We now define a (continuous) bijection from $\N^{\N}$ to $\R_{\geq0}$ based on \emph{continued fractions}.  
Intuitively, a sequence $(a_{n})_{n\in \N}$ of natural numbers is mapped to the real $x\in \R_{\geq0}$ via the following (generalised) continued fraction: 
\be\label{trix}\tag{CF}
x=a_{0}+{\cfrac {1}{1+{\cfrac {1}{a_{1}+{\cfrac {1}{1+{\cfrac {1}{a_{2}+\ddots \,}}}}}}}}
\ee
The real $x\in \R_{\geq0}$ in \eqref{trix} exists in $\ACAo$ in the sense that there is an explicit function $F:(\N^{\N}\times n)\di \Q$ such that $x=_{\R}\lim_{n\di \infty }F(f)(n)$, where $F(f)(n)\in \Q$ is essentially the continued fraction in \eqref{trix} `broken off' after encountering $a_{n}$.
The definition of $F$ can be be found in e.g.\ \cite{loppper}*{Ch.1, p.\ 7-9}.  
One readily shows that the mapping defined by \eqref{trix} is a bijection from $\N^{\N}$ to $\R_{\geq 0}$ in $\ACAo$.  
\qed
\end{proof}
We could prove similar results for \emph{a countable set in the unit interval has measure\footnote{For $A\subset \R$, let `$A$ has measure zero' mean that for any $\eps>0$, there is a sequence of closed intervals $\big(I_{n}\big)_{n\in \N}$ covering $A$ and such that $\eps >\sum^{\infty}_{n=0 }|J_{n}|$ for $J_{0}:= I_{0}$ and $J_{i+1}:= I_{i+1}\setminus \cup_{j\leq i}I_{j}$.  This follows from the usual definition as used in mathematics.\label{clukker}} zero}, which is intermediate between $\cocode_{0}$ and $\NIN$, which is shown in \cite{dagsamX} as an illustration how weak $\NIN$ is.  Nonetheless, we have the following result.  
\begin{thm}[$\ACAo$]\label{haha}
A countable set $A\subset [0,1]$ has \emph{weak\footnote{For $A\subset \R$, let `$A$ has weak measure zero' mean that for any $\eps>0$, there is a sequence $(\eps_{n})_{n\in \N}$, a set $B$ of closed intervals, and $Z:\R^{2}\di \N$ injective on $B$, such that 
$(\forall a\in A)(\exists (b, c)\in B)(a\in (b, c) )$ and $(\forall (b,c)\in B, \forall n\in \N)( Z((b, c))=n\di |b-c|\leq \eps_{n}  )$ and $\eps\geq\sum_{n=0}^{\infty}\eps_{n}$.  Given $\cocode_{0}$, this is the same as `measure zero'.\label{clukker2}} measure zero}.
\end{thm}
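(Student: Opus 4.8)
The plan is to exploit the injection directly, never enumerating $A$; this is precisely what separates \emph{weak} measure zero from ordinary measure zero, and is why $\ACAo$ suffices here whereas the unweakened statement sits between $\cocode_{0}$ and $\NIN$. So let $Y:\R\di\N$ witness that $A$ is countable, i.e.\ $Y$ is injective on $A$, and fix $\eps>0$. I would put $\eps_{n}:=\frac{\eps}{2^{n+1}}$, so that $\sum_{n=0}^{\infty}\eps_{n}=\eps$ and the final summation requirement is met immediately.

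Next I would define the set $B\subset\R^{2}$ of (endpoints of) intervals by declaring $(b,c)\in B$ if and only if, writing $a:=\frac{b+c}{2}$, we have $a\in A$, $b<c$, and $c-b=_{\R}\eps_{Y(a)}$. Deciding membership in $B$ amounts to a characteristic-function test on $A$, a strict inequality of reals, and a real-number equality; under $(\exists^{2})$, available in $\ACAo$, each of these is decidable, so $B$ has a genuine characteristic function in the sense of Definition~\ref{openset}. On all of $\R^{2}$ I would then define the indexing functional $Z$ by $Z((b,c)):=Y\big(\frac{b+c}{2}\big)$; note it need only be injective \emph{on} $B$.

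It then remains to check the three clauses in the definition of weak measure zero. For coverage, each $a\in A$ is the midpoint of the interval with endpoints $a\pm\frac{\eps_{Y(a)}}{2}$, which lies in $B$ (since $\eps_{Y(a)}>0$ forces $b<c$) and contains $a$ in its interior. For the length bound, if $Z((b,c))=n$ for some $(b,c)\in B$ then, with $a=\frac{b+c}{2}\in A$, we get $|b-c|=c-b=\eps_{Y(a)}=\eps_{n}\le\eps_{n}$. The summation bound $\eps\ge\sum_{n}\eps_{n}$ holds by the choice of $\eps_{n}$.

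The only genuine content is the \emph{injectivity of $Z$ on $B$}, and I expect this to be the step that dictates the exact shape of the construction. Suppose $(b_{1},c_{1}),(b_{2},c_{2})\in B$ with $Z((b_{1},c_{1}))=Z((b_{2},c_{2}))$; writing $a_{i}:=\frac{b_{i}+c_{i}}{2}\in A$, this reads $Y(a_{1})=Y(a_{2})$, whence $a_{1}=_{\R}a_{2}$ by injectivity of $Y$ on $A$. Since every interval in $B$ is completely determined by its midpoint $a_{i}$ together with the width $\eps_{Y(a_{i})}$, and both now coincide, the two endpoint-pairs are equal. Hence $Z$ is injective on $B$, and the verification is complete. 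The conceptual point, which I would flag explicitly, is that the single functional $Y$ does double duty—indexing the covering intervals and simultaneously controlling their widths—so no enumeration of $A$, and thus nothing of the strength of $\cocode_{0}$, is ever invoked.
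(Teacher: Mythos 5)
Your proof is correct and is essentially the paper's own construction: intervals centered at the points of $A$, indexed by $Z((b,c)):=Y\big(\frac{b+c}{2}\big)$, with $\eps_{n}:=\frac{\eps}{2^{n+1}}$. The only difference is your choice of interval width $\eps_{Y(a)}$ versus the paper's $2^{-Y(a)}$ --- a point in your favor, since the paper's widths violate the clause $|b-c|\leq \eps_{n}$ whenever $\eps<2$, so your version (together with the explicit injectivity check the paper compresses into ``clearly'') is the one that literally verifies all three clauses of weak measure zero.
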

\begin{proof}
Fix $A\subset [0,1]$ and $Y:[0,1]\di \N$ injective on $A$.  For $\eps>0$, define $\eps_{n}:= \frac{\eps}{2^{n+1}}$, $B:= \{ (a, b)\in \R^{2}: \frac{a+b}{2}\in A \wedge |b-a|={2^{-Y(\frac{a+b}{2})}}  \} $, and $Z((a,b)):=Y(\frac{a+b}{2})$.
Clearly, this shows that $A$ has weak measure zero, as required. \qed
\end{proof}
We say that a property holds \emph{weakly almost everywhere} (wae) in case it holds outside a set of weak measure zero as in Footnote \ref{clukker2}.

\smallskip

We finish this section with a conceptual remark regarding our base theory.
\begin{rem}\label{LEM}\rm
We have used $\ACAo$ as the base theory for the above results, since our notion of `set-as-characteristic function' as in Definition \ref{openset} is poorly behaved in the absence of $(\exists^{2})$.
One \emph{can} obtain equivalences over $\RCAo$, and let us establish $\NIN^{\N^{\N}}\di \NIN^{C}$ over $\RCAo$ as an example via the following steps.
\begin{itemize}
\item Fix any $Y:2^{\N}\di \N$, which may or may not be continuous.
\item In case $Y$ is \emph{continuous}, it is immediate that $Y(00\dots)=Y(00\dots00*11\dots)$ for enough instances of $0$ on the right.  
\item In case $Y$ is \emph{discontinuous}, use the results in \cite{kohlenbach2}*{\S3} to derive $(\exists^{2})$ over $\RCAo$.  We can now use the proof of Theorem \ref{nintsel} in $\ACAo$.
\end{itemize}
The above proof of course heavily relies on the law of excluded middle.  
\end{rem}

\subsection{Advanced robustness results}\label{adv}
In this section, we show that $\NIN$ is equivalent to various restrictions involving notions from mainstream mathematics, like semi-continuity and bounded variation; we first introduce the latter. 

\smallskip

First of all, an important weak continuity notion is \emph{semi-continuity}, introduced by Baire in \cite{beren2} around 1899.  
By \cite{beren2}*{\S84, p.\ 94-95}, the notion of quasi-continuity goes back to Volterra; any cliquish function is the sum of two quasi-continuous functions.  
Moreover, while the limits in the following definition may not exist in $\RCAo$, the associated inequalities always make sense.  
\bdefi[Weak continuity]\label{WCloset}~
\begin{itemize}
\item $f:\R\di \R$ is \emph{upper semi-continuous} if for all $x_{0}\in \R$, $f(x_{0})\geq_{\R}\lim\sup_{x\di x_{0}} f(x)$.
\item $f:\R\di \R$ is \emph{lower semi-continuous} if for all $x_{0}\in \R$, $f(x_{0})\leq_{\R}\lim\inf_{x\di x_{0}} f(x)$.
\item $ f:X\rightarrow \mathbb {R} $ is \emph{quasi-continuous} (resp.\ \emph{cliquish}) at $x\in X$ if for any $ \epsilon > 0$ and any open neighbourhood $U$ of $x$, there is a non-empty open ball ${ G\subset U}$ with $(\forall y\in G) (|f(x)-f(y)|<\eps)$ (resp.\ $(\forall y, z\in G) (|f(z)-f(y)|<\eps)$).
\end{itemize}
\edefi
Secondly, Jordan introduces the notion of \emph{bounded variation} in \cite{jordel} around 1881, also studied in second-order RM (\cite{nieyo, kreupel}).
Moreover, Jordan proves in \cite{jordel3}*{\S105} that functions of bounded variation are exactly those for which the notion of `length of the graph' makes sense; the latter boast\footnote{The notion of arc length was studied for discontinuous regulated functions in 1884  (\cite{scheeffer}*{\S1-2}), where it is also claimed to be essentially equivalent to Duhamel's 1866 approach from \cite{duhamel}*{Ch.\ VI}.  Around 1833, Dirksen, the PhD supervisor of Jacobi and Heine, provides a definition of arc length that is (very) similar to the modern one (see \cite{dirksen}*{\S2, p.\ 128}), but with some conceptual problems as discussed in \cite{coolitman}*{\S3}.
} an even `earlier' history.   What is more, Lakatos in \cite{laktose}*{p.\ 148} claims that Jordan did not invent or introduce the notion of bounded variation in \cite{jordel}, but rather discovered it in Dirichlet's 1829 paper \cite{didi3}.
\bdefi[Bounded variation]\label{varvar}
Any $f:[a,b]\di \R$ \emph{has bounded variation} on $[a,b]$ if there is $k_{0}\in \N$ such that $k_{0}\geq \sum_{i=0}^{n} |f(x_{i})-f(x_{i+1})|$ 
for any partition $x_{0}=a <x_{1}< \dots< x_{n-1}<x_{n}=b  $.
\edefi
Functions of bounded variation have only got countably many points of discontinuity (see e.g.\ \cite{voordedorst}*{Ch.\ 1}); Dag Normann and the author study 
this property in higher-order computability theory in \cite{dagsamXII}.  In the latter, we also study regulated functions (called `regular' in \cite{voordedorst}), defined as follows (say in $\ACAo$). 
\bdefi[Regulated function] 
A function $f:[0,1]\di \R$ is \emph{regulated} if for every $x_{0}\in [0,1]$, the `left' and `right' limit $f(x_{0}-)=\lim_{x\di x_{0}-}f(x)$ and $f(x_{0}+)=\lim_{x\di x_{0}+}f(x)$ exist.  
\edefi
Thirdly, Borel functions are defined in Definition \ref{defzie}; the usual definition of Borel set makes sense in $\ACAo$, where $(\exists^{2})$ is used to define countable unions.
\bdefi[Borel function]\label{defzie}
Any $f:[0,1]\di \R$ is a Borel function in case $f^{-1}((a, +\infty)):=\{x\in [0,1]:f(x)>a\}$ is a Borel set for any $a\in \R$.
\edefi
Fourth, recall the induction axiom $\IND_{0}$ from Section \ref{prelim2}.  Let $\Y$ be any property such that `$f:[0,1]\di \R$ satisfies $\Y$' follows from `$f$ has bounded variation on $[0,1]$' and where this implication can be established over (say) $\ACAo$.
\begin{thm}[$\ACAo+\IND_{0}$]\label{inyourface}
The following are equivalent to $\NIN$:
\begin{itemize}
\item $\NIN_{\bv}$: there is no injection from $[0,1]$ to $\Q$ that has bounded variation, 
\item $\NIN_{\textup{\textsf{Y}}}$: there is no injection from $[0,1]$ to $\Q$ that has property $\Y$,
\item $\NIN_{\textup{\textsf{Riemann}}}$: there is no injection from $[0,1]$ to $\Q$ that is Riemann integrable,
\item $\NIN_{\Borel}$: there is no Borel function that is an injection from $[0,1]$ to $\Q$,
\item $\NIN_{\textsf{\textup{reg}}}$: there is no injection from $[0,1]$ to $\Q$ that is regulated, 
\item $\NIN_{\cliq}$: there is no injection from $[0,1]$ to $\Q$ that is cliquish, 
\item $\NIN_{\semi}$: there is no upper semi-continuous injection from $[0,1]$ to $\Q$,
 \item $\NIN_{\semi}'$: there is no lower semi-continuous injection from $[0,1]$ to $\Q$. 
 \end{itemize}
 Only the implications involving the final five items require the use of $\IND_{0}$. 
\end{thm}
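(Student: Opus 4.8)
The plan is to prove each of the seven equivalences by separating a trivial forward implication from a constructive reversal. For the forward direction, $\NIN$ forbids \emph{every} injection $[0,1]\di\N$, and since $\Q$ embeds into $\N$ (using $(\exists^{2})$ to read off the index of a rational value), it forbids every injection $[0,1]\di\Q$, a fortiori every such injection carrying one of the listed properties. Hence $\NIN$ implies all seven restrictions with no extra axioms, and the entire content lies in the reversals $\NIN_{\ast}\di\NIN$, which I would prove contrapositively: from an arbitrary injection $Y:[0,1]\di\N$ I manufacture an injection $[0,1]\di\Q$ enjoying the relevant property.

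The single construction driving everything is $Z(x):=2^{-Y(x)}$. This is $\Q$-valued (indeed dyadic), extensional since $Y$ is, and injective because $2^{-Y(x)}=2^{-Y(y)}$ forces $Y(x)=Y(y)$, hence $x=_{\R}y$. First I would dispatch $\NIN_{\bv}$, $\NIN_{\Y}$, and $\NIN_{\textup{\textsf{Riemann}}}$ at once by showing $Z$ has bounded variation over $\ACAo$ alone. By injectivity $Z$ attains each value $2^{-n}$ at most once, so for any partition the values $Z(x_{0}),\dots,Z(x_{n})$ are distinct elements of $\{2^{-m}:m\in\N\}$, whence $\sum_{i}|Z(x_{i})-Z(x_{i+1})|\le 2\sum_{i}Z(x_{i})\le 2\sum_{m\ge0}2^{-m}=4$. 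Thus the total variation is at most $4$, giving $\NIN_{\bv}\di\NIN$. As bounded variation implies property $\Y$ (by the hypothesis on $\Y$) and implies Riemann integrability, the same $Z$ is a $\Y$-injection and a Riemann-integrable injection, so it also yields $\NIN_{\Y}\di\NIN$ and $\NIN_{\textup{\textsf{Riemann}}}\di\NIN$; no collection principle is used, which is why these three avoid $\IND_{0}$.

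For the final four items the same $Z$ (and its reflection $1-Z$) works, but now I would invoke $\IND_{0}$. The governing observation is that, for each threshold $N$, injectivity makes $\{x\in[0,1]:Y(x)\le N\}$ finite, so it cannot accumulate at any $x_{0}$; hence for $x$ near but distinct from $x_{0}$ one has $Y(x)>N$ and $Z(x)<2^{-N}$, forcing $\limsup_{x\di x_{0}}Z(x)=0\le Z(x_{0})$, i.e.\ $Z$ is upper semi-continuous. Symmetrically $1-Z=1-2^{-Y(x)}$ is lower semi-continuous, settling $\NIN_{\semi}$ and $\NIN_{\semi}'$; on a small ball avoiding the finite exceptional set $Z$ stays below $2^{-N}$, so its oscillation is below any prescribed $\eps$, giving cliquishness and $\NIN_{\cliq}$; and $Z$ is Borel since each $Z^{-1}((a,+\infty))$ equals a finite, hence closed, set of points. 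The precise role of $\IND_{0}$ is to upgrade ``for every $m\le N$ there is at most one $x$ with $Y(x)=m$'' into an explicit finite list of those witnesses: transporting $Y$ to $2^{\N}$ via $\eta$ and $\r$ produces a $Y^{2}$ satisfying the at-most-one hypothesis of Principle \ref{bcktre}, and the finite sequence $w$ it returns collects all points with $Y$-value $\le N$, from which I compute the separating radius $\delta$ at each $x_{0}$ and a genuine Borel code for the finite preimage.

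The main obstacle I anticipate is this bookkeeping around $\IND_{0}$: recasting the third-order injection $Y:[0,1]\di\N$ as an instance $Y^{2}$ on $2^{\N}$ for which ``at most one witness per value'' provably holds, correctly handling the points with non-unique binary representation via $\r$ and $\eta$ as in Theorem \ref{nintsel}, and then converting the finite witness list uniformly into the $\eps$--$\delta$ estimates witnessing semi-continuity and cliquishness and into a valid Borel code. The bounded-variation argument needs no such finite collection and therefore goes through in $\ACAo$, which is exactly the reason that only the implications involving the final four items require $\IND_{0}$.
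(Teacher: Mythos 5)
Your proposal is correct and follows essentially the same route as the paper's proof: the same witness function $Z(x):=2^{-Y(x)}$ (the paper uses $W(x):=2^{-Y(x)-1}$), the same geometric-series bound establishing bounded variation, and the same $\IND_{0}$-based finiteness arguments yielding upper/lower semi-continuity, cliquishness, and the Borel property. Two small repairs: for $\NIN_{\textup{\textsf{Riemann}}}$ you should verify Riemann integrability of $Z$ directly from its Riemann sums (as the paper does), since the blanket claim that `bounded variation implies Riemann integrability' is an additional commitment you would need to formalize in $\ACAo$; and $Z^{-1}((a,+\infty))$ equals $[0,1]$ rather than a finite set when $a\leq 0$, which is of course still Borel.
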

\begin{proof}
As there is an injection from $\Q$ to $\N$ in $\RCA_{0}$, we only need to prove that $\NIN_{\bv}\di \NIN$ over $\ACAo$ for the first equivalence.
To this end, let $Y:[0,1]\di \N$ be an injection and define $W:[0,1]\di \Q$ by $W(x):=\frac{1}{2^{Y(x)+1}}$.  
Then $W$ has bounded variation with upper bound $2$.  Indeed, since $Y$ is an injection on $[0,1]$, any sum $\sum_{i=0}^{n} |W(x_{n})-W(x_{n+1})|$ is at most $\sum_{i=0}^{n} \frac{1}{2^{i+1}}$.
By $\NIN_{\bv}$, there are $x, y\in [0,1]$ with $x\ne_{\R} y$ and $W(x)=_{\Q} W(y)$.
This implies the contradiction $Y(x)=_{0} Y(y)$, and $\NIN\asa \NIN_{\bv}$ follows.  
For $\NIN_{\textup{\textsf{Riemann}}}\di \NIN$, the function $W$ is Riemann integrable following the $\eps$-$\delta$-definition. 
Indeed, fix $\eps_{0}>0$ and find $k_{0}\in \N$ such that $\frac{1}{2^{k_{0}}}<\eps_{0}$.  Since $Y$ is an injection, if $P$ is a partition of $[0,1]$ consisting of $|P|$-many points and with mesh $\|P\|\leq \frac{1}{2^{k_{0}}}$, it is immediate that the Riemann sum $S(W, P)$ is smaller than $\frac{1}{2^{k_{0}}}\sum_{n=0}^{|P|}\frac{1}{2^{i+1}}$, which is at most $\frac{1}{2^{k_{0}}}$. 

\smallskip

For the implication $\NIN_{\semi}\di \NIN$, consider the same $W:[0,1]\di \R$ and note that $[\lim\sup_{x\di x_{0}}W(x)]=_{\R}0<_{\R}W(x_{0})$ for any $x_{0}\in [0,1]$ in case $Y:[0,1]\di \N$ is an injection.  Hence, $W(x)$ is upper semi-continuous and $Z(x):=1-W(x)$ is similarly \emph{lower} semi-continuous, since $[\lim\inf_{x\di x_{0}}Z(x)]=_{\R}1>_{\R}Z(x_{0})$ for any $x_{0}\in [0,1]$.  The finite sequences provided by $\INDY$ seem essential to establish these semi-continuity claims.    
One proves $\NIN_{\cliq}\di \NIN$ in the same way, namely using $\IND_{0}$ to exclude the finitely many `too large' function values.  
For the implication $\NIN_{\Borel}\di \NIN$, note that for an injection $Y:[0,1]\di \N$ the above function $W(x)$ is Borel as $W^{-1}\big ((a, +\infty) \big)$ for any $a\in \R$ is either finite or $[0,1]$, and that these are Borel sets is immediate in $\ACAo+\IND_{0}$.
For the implication $\NIN_{\textsf{reg}}\di \NIN$, consider the same $W:[0,1]\di \R$ and note that $W(0+)=W(1-)=W(x+)=W(x-)=0$ for $x\in (0,1)$ in the same way as for the semi-continuity of $W$.
Thus, $W$ is regulated and we are done. 
\qed
\end{proof}
As noted above, a function has bounded variation iff it has finite arc length.  The proof of this equivalence (\cite{voordedorst}*{Prop.\ 3.28}) goes through in $\RCAo$, i.e.\ we may replace `bounded variation' by `finite arc length' in the previous theorem. 

\smallskip

Fifth, we say that a function has \emph{total variation equal to $a\in \R$} in case the supremum over all partitions of $ \sum_{i=0}^{n} |f(x_{i})-f(x_{i+1})|$ in Def.\	 \ref{varvar} equals $a$.
\begin{cor}[$\ACAo+\IND_{0}$]\label{inyourfacecor}
The following are equivalent to $\NBI$:
\begin{itemize}
\item $\NBI_{\textup{\textsf{Riemann}}}$: there is no bijection from $[0,1]$ to $\Q$ that is Riemann integrable,
\item $\NBI_{\bv}$: there is no injection from $[0,1]$ to $\Q$ that has total variation $1$, 
\item $\NBI_{\Borel}$: there is no Borel function that is a bijection from $[0,1]$ to $\Q$,
\item $\NBI_{\cliq}$: there is no bijection from $[0,1]$ to $\Q$ that is cliquish, 
\item $\NBI_{\semi}$: there is no upper semi-continuous bijection from $[0,1]$ to $\Q$,
 \item $\NBI_{\semi}'$: there is no lower semi-continuous bijection from $[0,1]$ to $\Q$. 
 \end{itemize}
Only the implications involving the final four items require the use of $\IND_{0}$. 
\end{cor}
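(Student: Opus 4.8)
The plan is to follow the proof of Theorem~\ref{inyourface} almost verbatim, treating each item as an equivalence with an `easy' direction (from the pathological nice map to a bijection onto $\N$) and a `hard' reversal (from a bijection $Y:[0,1]\di\N$, supplied by $\neg\NBI$, to a nice map onto $\Q$). Throughout I would use that $\RCAo$ proves the existence of a bijection $e:\N\di\Q$, so that $\N$ and $\Q$ are interchangeable as targets up to relabelling.

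For the five genuine-bijection items the easy direction is immediate: a property-preserving bijection $W:[0,1]\di\Q$ composed with $e^{-1}:\Q\di\N$ is a bijection $[0,1]\di\N$, contradicting $\NBI$. For $\NBI_{\bv}$ the easy direction is the subtle one, since an injection $W:[0,1]\di\Q$ of total variation $1$ only yields an injection into $\N$, not a surjection; here one must enumerate the (necessarily infinite) range $W([0,1])\subseteq\Q$ and relabel. I would extract this enumeration from the bounded-variation structure of $W$, i.e.\ a Jordan-style decomposition into monotone pieces, which is exactly the circle of ideas the paper flags as connected to $\NBI$.

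The reversals reuse the template $W(x):=\tfrac{1}{2^{Y(x)+1}}$ of Theorem~\ref{inyourface}, which is already Riemann integrable, Borel, cliquish and upper semi-continuous. Since $Y$ is surjective, feeding it through any bijection $\N\di\Q$ makes surjectivity onto $\Q$ automatic; the real work is to retain the property while the target becomes the unbounded, dense set $\Q$. I would assign rationals to points of $[0,1]$ so that the preimage of each `large' rational is a local maximum (for upper semi-continuity), respectively locally isolated (for cliquishness), and then use $\IND_{0}$ to dispatch, at each scale, the finitely many function values exceeding the current bound---precisely the mechanism already needed for the final items of Theorem~\ref{inyourface}. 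For $\NBI_{\bv}$ the same freedom is spent instead on pinning the supremum of the partition sums to exactly $1$, by arranging the construction to behave monotonically with span $1$ off a finite set controlled by $\IND_{0}$.

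I expect the main obstacle to be reconciling surjectivity onto $\Q$ with the semi-continuity and cliquishness requirements. Under Heine--Borel an upper semi-continuous function on $[0,1]$ is bounded above, so no such \emph{surjection} onto the unbounded set $\Q$ could exist; the equivalence is viable only because Heine--Borel fails in $\ACAo$---which is exactly the phenomenon tying $\NBI$ to Cousin's lemma. Turning this failure into a positive construction, namely placing the preimages of the cofinally many large rationals so that every point keeps its $\limsup$ below its value while no rational is missed, is the delicate step, and it is where $\IND_{0}$ is indispensable; the same bookkeeping simultaneously yields the exact total variation demanded by $\NBI_{\bv}$.
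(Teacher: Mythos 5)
Your plan diverges substantially from the paper's proof, and its two key steps cannot be carried out. The paper's own proof is two sentences long: it reuses $W(x)=2^{-Y(x)-1}$ from Theorem~\ref{inyourface} unchanged, asserts that surjectivity of $Y$ pins the total variation at exactly $1$ (settling $\NBI_{\bv}$), and declares the remaining items ``immediate''; it never attempts the surjectivization onto $\Q$ around which your whole proposal revolves. Your instinct that this is the crux is sound, but your proposed resolution is provably impossible. Boundedness of upper semi-continuous functions does \emph{not} require $\HBU$: in $\ACAo$ the formula ``$\Psi(x)>_{\R}n$'' is decidable via $\exists^{2}$, so $\QFAC^{1,0}$ (part of Kohlenbach's base theory) converts unboundedness $(\forall n\in\N)(\exists x\in[0,1])(\Psi(x)>_{\R}n)$ into a \emph{sequence} $(x_{n})_{n\in\N}$ with $\Psi(x_{n})>n$, and Bolzano--Weierstrass (available in $\ACAo$) yields a limit point at which upper semi-continuity visibly fails. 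An even easier argument shows no unbounded function is Riemann integrable. Since any surjection onto $\Q$ is unbounded, $\ACAo$ outright refutes the existence of an upper (or lower) semi-continuous or Riemann-integrable bijection from $[0,1]$ to $\Q$; the ``delicate step'' of placing preimages of cofinally many large rationals while keeping every $\limsup$ small cannot be completed, with or without $\IND_{0}$, and your appeal to the failure of Heine--Borel is misplaced. (This in fact exposes a problem with the statement itself: $\NBI_{\textup{\textsf{Riemann}}}$ is provable in $\ACAo$ alone, while $\NBI$ is not by Remark~\ref{proke}, so that equivalence cannot hold over the stated base theory.)

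The other half of your plan is also not carried out. For the ``easy'' direction of $\NBI_{\bv}$ you must pass from an injection $V:[0,1]\di\Q$ of total variation exactly $1$ to a \emph{bijection} $[0,1]\di\N$, i.e.\ you must enumerate the range of $V$; you delegate this to a ``Jordan-style decomposition'', but Section~\ref{knew} of the paper shows precisely that the third-order Jordan decomposition theorem is itself as hard as the principles under discussion, so it is unavailable in $\ACAo+\IND_{0}$, and even granting it, no mechanism for enumerating the range is indicated. Finally, neither you nor the paper verifies the total variation claim: interleaving the $k$ largest values of $W$ with points of very small value (such points exist in any subinterval, by injectivity of $Y$) produces partition sums approaching $2-W(0)-W(1)\geq 5/4$, so the total variation of $W$ is not $1$ and at least a rational rescaling is needed. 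In sum, you correctly isolate the two difficulties that the paper's terse proof passes over in silence, but you resolve neither, and the semi-continuity/Riemann portion of your plan is impossible in principle.
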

\begin{proof}
For the first equivalence, $W:[0,1]\di \R$ from the proof has total variation \emph{exactly} $1$ in case $Y$ is also surjective.  The other equivalences are now immediate by the proof of the theorem.
\qed
\end{proof}
As an intermediate conclusion, one readily proves that there are no \emph{continuous} injections from $\R$ to $\Q$ (say over $\ACAo$).  However, Theorem~\ref{inyourface} and Corollary~\ref{inyourfacecor} show that 
admitting countably many points of discontinuity, one obtains principles that are extremely hard to prove following Remark \ref{proke}. 

\smallskip

Finally, one can greatly generalise Theorem \ref{inyourface} based on Remark \ref{essenti}.  
Indeed, there are many spaces intermediate between bounded variation and regulated, each of which yields a natural and equivalent restriction of $\NIN$.  
\begin{rem}[Intermediate spaces]\label{essenti}\rm 
The following spaces are intermediate between bounded variation and regulated; all details may be found in \cite{voordedorst}.  
Wiener spaces from mathematical physics are based on \emph{$p$-variation}, which amounts to replacing `$ |f(x_{i})-f(x_{i+1})|$' by `$ |f(x_{i})-f(x_{i+1})|^{p}$' in the definition of variation. 
Young generalises this to \emph{$\phi$-variation} which instead involves $\phi( |f(x_{i})-f(x_{i+1})|)$ for so-called Young functions $\phi$, yielding the Wiener-Young spaces.  
Perhaps a simpler construct is the Waterman variation, which involves $ \lambda_{i}|f(x_{i})-f(x_{i+1})|$ and where $(\lambda_{n})_{n\in \N}$ is a sequence of reals with nice properties; in contrast to bounded variation, any continuous function is included in the Waterman space (\cite{voordedorst}*{Prop.\ 2.23}).  Combining ideas from the above, the \emph{Schramm variation} involves $\phi_{i}( |f(x_{i})-f(x_{i+1})|)$ for a sequence $(\phi_{n})_{n\in \N}$ of well-behaved `gauge' functions.  
As to generality, the union (resp.\ intersection) of all Schramm spaces yields the space of regulated (resp.\ bounded variation) functions, while all other aforementioned spaces are Schramm spaces (\cite{voordedorst}*{Prop.\ 2.43 and 2.46}).
In contrast to bounded variation and the Jordan decomposition theorem, these generalised notions of variation have no known `nice' decomposition theorem.  The notion of \emph{Korenblum variation} does have such a theorem (see \cite{voordedorst}*{Prop.\ 2.68}) and involves a distortion function acting on the \emph{partition}, not on the function values.  
\end{rem}

\subsection{Connections to mainstream mathematics}\label{knew}
We establish the connection between $\NIN$ and two theorems from mainstream mathematics, namely \emph{Cousin's lemma} and \emph{Jordan's decomposition theorem}.

\smallskip

First of all, our results have significant implications for the RM of Cousin's lemma.  
Indeed, as shown in \cites{dagsamIII}, $\Z_{2}^{\omega}$ cannot prove Cousin's lemma as follows:
\be\tag{$\HBU$}
(\forall \Psi:\R\di \R^{+})(\exists  y_{0}, \dots, y_{k}\in [0,1])([0,1]\subset \cup_{i\leq k} B(y_{i}, \Psi(y_{i}))),
\ee
which expresses that the \emph{canonical covering} $\cup_{x\in [0,1]}B(x, \Psi(x))$ has a finite sub-covering, namely given by $y_{0}, \dots, y_{k}\in [0,1]$.  
In \cite{basket2}, it is shown that $\HBU$ formulated using \emph{second-order codes} for Borel functions is provable in $\ATR_{0}$ plus some induction.  
We now show that this result from \cite{basket2} is entirely due to the presence of second-order codes.  Indeed, by Theorem \ref{genewegneffen}, the restriction of $\HBU$ to Borel functions still implies $\NIN$, which is not provable in $\Z_{2}^{\omega}$ by Remark \ref{proke}.
To this end, let $\HBU_{\semi}$ (resp.\ $\HBU_{\Borel}$) be $\HBU$ restricted to $\Psi:[0,1]\di \R^{+}$ that are upper semi-continuous (resp.\ Borel) as in Definition \ref{WCloset} (resp.\ Def.\ \ref{defzie}). 
\begin{thm}[$\ACAo+\IND_{0}$]\label{genewegneffen}
$\NIN$ follows from $\HBU_{\textsf{\textup{\semi}}}$ and from $ \HBU_{\Borel}$; extra induction is only needed in the first case.  
\end{thm}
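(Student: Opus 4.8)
The plan is to argue by contraposition: from a (putative) injection $Y:[0,1]\di \N$ I would manufacture a gauge $\Psi:[0,1]\di \R^{+}$ lying in the required function class, and then turn the finite sub-cover handed down by $\HBU_{\semi}$ (resp.\ $\HBU_{\Borel}$) into a length contradiction. The natural choice, making balls tiny exactly where $Y$ is small, is
\be\label{gaugeplan}
\Psi(x):=\tfrac{1}{2^{Y(x)+3}}.
\ee
First I would apply the relevant $\HBU$-instance to the canonical cover $\cup_{x\in[0,1]}B(x,\Psi(x))$, obtaining $y_{0},\dots,y_{k}\in[0,1]$ with $[0,1]\subset\cup_{i\leq k}B(y_{i},\Psi(y_{i}))$. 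Using $(\exists^{2})$ I would discard duplicate points so that the $y_{i}$ are pairwise $\ne_{\R}$; injectivity of $Y$ then forces the numbers $Y(y_{0}),\dots,Y(y_{k})$ to be pairwise distinct. Hence the total length of the sub-cover is at most $\sum_{i\leq k}2\Psi(y_{i})=\sum_{i\leq k}2^{-(Y(y_{i})+2)}\leq\sum_{n\in\N}2^{-(n+2)}=\tfrac12$, whereas any finite family of intervals covering $[0,1]$ has total length $\geq1$ by finitely additive sub-additivity, which is available in $\ACAo$. This contradiction yields $\NIN$, and this core computation uses no induction beyond $\ACAo$.

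What remains is to certify that the \emph{single} gauge \eqref{gaugeplan} belongs to both classes, and it is exactly here that the two cases diverge. For $\HBU_{\semi}$ I would verify upper semi-continuity of $\Psi$ at each $x_{0}$: as $\Psi$ is strictly decreasing in $Y$, the only $x$ with $\Psi(x)>\Psi(x_{0})$ are the points with $Y(x)<Y(x_{0})$, of which, by injectivity, there is at most one for each value below $Y(x_{0})$. Thus $\INDY$ applies and returns a finite sequence listing precisely these `large' points, and choosing $\delta$ below their minimal distance to $x_{0}$ gives $\Psi(x)\leq\Psi(x_{0})$ on the punctured ball $B(x_{0},\delta)$, i.e.\ the semi-continuity inequality. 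The role of $\INDY$ is precisely to build the neighbourhood, by collecting the finitely many obstructing points into an explicit finite object.

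For $\HBU_{\Borel}$ I would instead check Borelness via Definition \ref{defzie}: for each $a$ the super-level set $\{x:\Psi(x)>a\}$ equals $\{x:Y(x)\leq k\}$ for the appropriate $k$, which is \emph{finite} by injectivity and therefore closed, hence Borel. The reason no extra induction is needed here is that this argument only bounds the \emph{cardinality} of the super-level set through injectivity—an estimate internal to $\ACAo$—and never exhibits its members, in contrast to the semi-continuous case, where $\INDY$ is genuinely used to enumerate the obstructing points and fabricate a punctured neighbourhood. The hard part of the proof, and the step I would scrutinise most carefully, is exactly this bookkeeping of logical strength in the weak base theory: confirming that the passage `finite super-level set $\Rightarrow$ closed $\Rightarrow$ Borel' really goes through in $\ACAo$ using only a provable cardinality bound, while the pointwise neighbourhood condition defining semi-continuity provably cannot be met without $\INDY$. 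Once class-membership is pinned down, the uniform length computation above closes both implications simultaneously.
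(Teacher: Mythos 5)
Your proposal reproduces the paper's proof essentially step for step: the same gauge $\Psi(x)=\frac{1}{2^{Y(x)+3}}$, the same measure-theoretic contradiction (a finite sub-cover through pairwise distinct points has total length at most $\sum_{i\leq k}2^{-(i+2)}\leq \frac{1}{2}<1$, using only the basic measure theory of finite sequences of intervals), and the same class-membership checks, namely upper semi-continuity via the $\INDY$-provided finite list of points where $Y$ takes small values, and Borelness via finiteness of the super-level sets; the paper merely compresses these membership checks into a citation of the proof of Theorem \ref{inyourface}.

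One caveat, on the step you yourself flag as the one to scrutinise. Your claim that the Borel case needs \emph{no} induction---``finite, therefore closed, hence Borel'', using only a cardinality bound and ``never exhibiting members''---is asserted rather than proved, and I do not see how it goes through in $\ACAo$ alone: to show $\{x:Y(x)\leq k\}$ is Borel one must produce a Borel code, and this seems to require either listing the (at most $k+1$) elements, which is exactly what $\INDY$ provides, or making a case distinction ``$\{x:Y(x)=m\}$ empty or a singleton'' for each $m\leq k$ and then collecting the resulting codes, which is finite choice and again reduces to induction. Note that the paper's own proof of Theorem \ref{inyourface} states that these sets are Borel ``in $\ACAo+\IND_{0}$'', i.e.\ it too invokes induction there, in tension with the refinement clause of Theorem \ref{genewegneffen}. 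Since the ambient base theory of the theorem is $\ACAo+\IND_{0}$ in any case, this does not affect the correctness of your proof of the implication itself, only the side-claim about where induction is needed---a claim the paper's proof does not substantiate either.
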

\begin{proof}
Let $Y:[0,1]\di \N$ be an injection and consider $\Psi(x):=\frac{1}{2^{Y(x)+3}}$, which is upper semi-continuous and Borel by the proof of Theorem \ref{inyourface}.  
Now consider the uncountable covering $\cup_{x\in [0,1]}B(x, \frac{1}{2^{Y(x)+3}})$ of $[0,1]$.  
Since $Y$ is an injection, we have $\sum_{i\leq k}|B(x_{i}, \frac{1}{2^{Y(x_{i})+3}})|\leq \sum_{i\leq k} \frac{1}{2^{i+2}} \leq \frac{1}{2}$ for any finite sequence $x_{0}, \dots, x_{k}$ of distinct reals in $[0,1]$.    
In this light, $\HBU_{\semi}$ and $\HBU_{\Borel}$ are false.  
We note that the required basic measure theory (for finite sequences of intervals) can be developed in $\RCA_{0}$ (\cite{simpson2}*{X.1}). 
\qed
\end{proof}
We now show that we can replace `Borel' by `Baire class 2' in Theorem \ref{genewegneffen}, assuming the right (equivalent) definition.  
Now, \emph{Baire classes} go back to Baire's 1899 dissertation (\cite{beren2}) and a function is `Baire class $0$' if it is continuous and `Baire class $n+1$' if it is the pointwise limit of Baire class $n$ functions.  
Baire's \emph{characterisation theorem} (\cite{beren}*{p.\ 127}) expresses that a function is Baire class $1$ iff there is a point of continuity of the induced function on each perfect set.

\smallskip

Now let $\textsf{B2}$ be the class of all $g:[0,1]\di \R$ such that $g=\lim_{n\di \infty }g_{n}$ on $[0,1]$ and where for all $n\in\N$ and perfect $P\subset [0,1]$, the restriction ${g_{n}}_{\upharpoonright P}$ has a point of continuity on $P$. 
We have the following corollary.  
\begin{cor}[$\ACAo+\IND_{0}$]\label{dokcor}
We have $\HBU_{\textsf{\textup{B2}}}\di \NIN$ where the former is the restriction of $\HBU$ to $\Psi:[0,1]\di \R^{+}$ in $\textup{\textsf{B2}}$.
\end{cor}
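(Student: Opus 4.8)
The plan is to re-use the discontinuous ``spike'' function from the proof of Theorem~\ref{genewegneffen} and to show that it already lies in the class $\textup{\textsf{B2}}$, so that $\HBU_{\textsf{\textup{B2}}}$ becomes applicable to it. Concretely, to prove $\NIN$ I would argue by contradiction: suppose $Y:[0,1]\di \N$ is an injection and put $\Psi(x):=\frac{1}{2^{Y(x)+3}}$, which maps into $\R^{+}$. The heart of the argument is the verification that $\Psi\in \textup{\textsf{B2}}$; once this is in hand, the measure-theoretic estimate from Theorem~\ref{genewegneffen} shows that the canonical covering $\cup_{x\in[0,1]}B(x,\Psi(x))$ admits no finite sub-covering, whence $\HBU_{\textsf{\textup{B2}}}$ fails and $\NIN$ follows. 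Note that the approximants below need only map into $\R$, while $\Psi$ itself stays in $\R^{+}$, as required by $\HBU$.

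To exhibit $\Psi$ as a pointwise limit of Baire class $1$ functions, I would define the truncations $\Psi_{n}(x):=\Psi(x)$ when $Y(x)\leq n$ and $\Psi_{n}(x):=0$ otherwise. For each fixed $x$ we have $\Psi_{n}(x)=\Psi(x)$ as soon as $n\geq Y(x)$, so $\Psi=\lim_{n\di\infty}\Psi_{n}$ pointwise on $[0,1]$. Since $Y$ is injective, the support $\{x\in[0,1]:Y(x)\leq n\}$ of $\Psi_{n}$ contains at most $n+1$ points, and this finiteness is exactly where $\IND_{0}$ enters, as in Theorem~\ref{genewegneffen}. The remaining task is to check that each $\Psi_{n}$ has a point of continuity on every perfect $P\subset[0,1]$: given $n+2$ distinct points of $P$, their $Y$-values are distinct by injectivity, so by pigeonhole (again $\IND_{0}$) at least one such point $q$ satisfies $Y(q)>n$, i.e.\ lies outside the finite support of $\Psi_{n}$; on a small enough ball around $q$ the restriction $\Psi_{n}\upharpoonright P$ is identically $0$, hence continuous at $q$. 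This yields $\Psi\in\textup{\textsf{B2}}$, even though $\Psi$ itself is nowhere continuous on any perfect set (which is harmless, since $\textup{\textsf{B2}}$ only constrains the approximants).

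Finally, applying $\HBU_{\textsf{\textup{B2}}}$ to $\Psi$ produces $y_{0},\dots,y_{k}\in[0,1]$ with $[0,1]\subset\cup_{i\leq k}B(y_{i},\Psi(y_{i}))$. Because $Y$ is injective, distinct chosen points have distinct $Y$-values, so the total length satisfies $\sum_{i\leq k}2\Psi(y_{i})\leq\sum_{j=0}^{\infty}\frac{1}{2^{j+2}}=\frac12<1$, contradicting the fact that these finitely many intervals cover $[0,1]$; as in Theorem~\ref{genewegneffen}, this elementary measure theory for finite sequences of intervals is already available in $\RCAo$. I expect the main obstacle to be the perfect-set continuity step: one must extract $n+2$ distinct points from the tree presentation of $P$ and then argue, in the weak base theory $\ACAo+\IND_{0}$, that $q$ has positive distance to the finite support without being able to enumerate that support uniformly. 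This is precisely where the pigeonhole form of $\IND_{0}$ is indispensable, mirroring the role it plays for $\HBU_{\semi}$ in Theorem~\ref{genewegneffen}.
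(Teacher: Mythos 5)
Your proposal is correct and takes essentially the same approach as the paper's proof: the same spike function $\Psi(x)=2^{-Y(x)-3}$, the same truncations $\Psi_{n}$ with finite support (secured via $\IND_{0}$) giving continuity points of ${\Psi_{n}}_{\upharpoonright P}$ on every perfect $P$, and the same measure estimate showing no finite sub-covering exists. The differences are cosmetic only: the paper works with $Y$ injective on a general $A\subset[0,1]$ and sets its approximants to $1/8$ (rather than $0$) off the truncated support, and uses the constant $+5$ instead of $+3$.
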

\begin{proof}
Fix $A\subset [0,1]$ and $Y:[0,1]\di \N$ with $Y$ is injective on $A$.  
Define $\Psi:[0,1]\di \R^{+}$ as follows: $\Psi(x)$ is $\frac{1}{2^{Y(x)+5}}$ in case $x\in A$, and $1/8$ otherwise.  
Define $\Psi_{n}$ as $\Psi$ with the condition `$Y(x)\leq n+5$' in the first case. 
Clearly $\Psi=\lim_{n\di \infty}\Psi$ and $\Psi\in \textsf{B2}$, as $\Psi_{n}$ only has at most $n+5$ points of discontinuity (the set of which is not perfect in $\ACAo+\IND_{0}$).
For a finite sub-covering $x_{0}, \dots, x_{k}\in [0,1]$ of $\cup_{x\in [0,1]}B(x, \Psi(x))$, there must be $j\leq k$, with $x_{j}\not \in A$.  
Indeed, the measure of $\cup_{i\leq k} B(x_{i}, \Psi(x_{i}))$ is otherwise below $\sum_{n=0}^{k}\frac{1}{2^{i+5}}<1$, a contradiction as the required basic measure theory can be developed in $\RCA_{0}$ (\cite{simpson2}*{X.1}). \qed
\end{proof}
Secondly, Jordan proves the following fundamental theorem about functions of bounded variation around 1881 in \cite{jordel}.
\begin{thm}[Jordan decomposition theorem]\label{drd}
Any $f : [0, 1] \di \R$ of bounded variation is the difference of two non-decreasing functions $g, h:[0,1]\di \R$.
\end{thm}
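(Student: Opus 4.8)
The plan is to use the classical \emph{total variation} construction. Given $f:[0,1]\di\R$ of bounded variation with bound $k_{0}$ as in Definition~\ref{varvar}, I would define the total variation function $V:[0,1]\di\R$ by
\[
V(x):=\sup_{P}\ \sum_{i=0}^{n-1}|f(t_{i+1})-f(t_{i})|,
\]
where the supremum ranges over all partitions $P:0=t_{0}<t_{1}<\dots<t_{n}=x$ of $[0,x]$. Setting $g:=V$ and $h:=V-f$, one has $f=g-h$, and it remains only to check that $g$ and $h$ are non-decreasing.

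Both monotonicity claims are routine once $V$ is available. By additivity of the variation, for $x\leq y$ we have $V(y)=V(x)+V_{f}[x,y]$, where $V_{f}[x,y]\geq 0$ is the variation of $f$ on $[x,y]$; hence $V(x)\leq V(y)$ and $g$ is non-decreasing. For $h$, fix $x<y$ and note that the two-point partition of $[x,y]$ gives $V_{f}[x,y]\geq|f(y)-f(x)|\geq f(y)-f(x)$, so that $V(y)-f(y)\geq V(x)-f(x)$, i.e.\ $h$ is non-decreasing. Thus $f=g-h$ is the desired decomposition into non-decreasing functions.

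The hard part, and the only genuinely delicate step in the present setting, is guaranteeing that $V$ \emph{exists} as a third-order function over a weak base theory. Classically the supremum exists merely because the variation is bounded by $k_{0}$, but \emph{defining} $V(x)$ amounts to deciding, for each rational $q<k_{0}$, whether some partition of $[0,x]$ has variation exceeding $q$. Since $f$ may be discontinuous, this cannot be reduced to partitions with rational breakpoints---a single jump at an irrational point already contributes to the variation---so the $t_{i}$ must range over \emph{arbitrary} reals. Coding a finite partition as one element of $\N^{\N}$, the statement `there is a partition of $[0,x]$ with variation $>q$' is $\Sigma_{1}^{1}$ in the parameter $f$ (given $(\exists^{2})$), so computing $V$ pointwise appears to require the Suslin functional $\SS^{2}$, i.e.\ working in $\FIVE^{\omega}$ rather than in $\ACAo$.

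I therefore expect the real content to lie here: isolating the minimal comprehension that produces $V$. In view of Theorem~\ref{inyourface}, which already ties bounded variation to $\NIN$, I would anticipate that the honest strength of the Jordan decomposition theorem for arbitrary (possibly discontinuous) BV functions sits well above $\ACAo$, and I would aim to extract a reversal to an $\NIN$-type principle, or an equivalence with a fragment of comprehension, as the main deliverable.
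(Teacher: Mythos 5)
Your total-variation construction is the standard classical proof, and both monotonicity checks are correct; your observation that rational breakpoints do not suffice for discontinuous $f$ is also right. But note the actual status of Theorem~\ref{drd} in the paper: it is never proved there. It is stated as Jordan's classical theorem of 1881, with the remark that the \emph{second-order coded} version is provable in $\ACA_{0}$ by \cite{nieyo, kreupel}, and the paper's contribution is the (unlabelled) theorem immediately following it, which shows that the third-order version \emph{implies} $\HBU_{\bv}$ and hence $\NIN$, so that it is unprovable in $\Z_{2}^{\omega}$ by Remark~\ref{proke}. In that sense your closing paragraph, namely that the honest deliverable is a reversal to an $\NIN$-type principle rather than a proof, is exactly what the paper does, and your diagnosis that the existence of the variation function $V$ is the sole bottleneck is the right one.

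Where your calibration goes wrong is the claim that defining $V$ pointwise ``appears to require the Suslin functional $\SS^{2}$, i.e.\ working in $\FIVE^{\omega}$ rather than in $\ACAo$''. The statement ``some partition of $[0,x]$ has variation $>q$'' is of $\Sigma_{1}^{1}$ \emph{form}, but its parameter $f$ is a third-order object, and Kohlenbach's $\SS^{2}$ (like every $\SS_{k}^{2}$) only decides $\Sigma_{k}^{1}$-formulas with first- and second-order parameters; it simply cannot be applied to this formula. This is not a pedantic point: by the paper's own chain (Jordan decomposition $\Rightarrow$ $\HBU_{\bv}$ $\Rightarrow$ $\NIN$) together with Remark~\ref{proke}, no amount of conventional comprehension, not $\SS^{2}$ and not all of $\Z_{2}^{\omega}=\cup_{k}\SIXK$, proves Theorem~\ref{drd}. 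Your supremum construction does go through once one has $(\exists^{3})$, i.e.\ in $\Z_{2}^{\Omega}$, since there the relevant existential quantifier over (codes of) real partitions can be decided with $f$ as a parameter. So the theorem's strength is not ``a fragment of comprehension somewhat above $\ACAo$'', and there is no hope of the equivalence with a comprehension fragment you suggest as an alternative: the principle is \emph{incomparable} with the conventional comprehension hierarchy, following from $(\exists^{3})$ while implying only the (conventionally unprovable yet weak-looking) principle $\NIN$. That incomparability is precisely the phenomenon the paper is documenting, and is the motivation for its proposed two-dimensional scale.
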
 
Formulated using second-order codes, Theorem \ref{drd} is provable in $\ACA_{0}$ (see \cite{nieyo, kreupel}); we now show that the third-order version
is \emph{hard to prove} as in Remark \ref{proke}.
\begin{thm}[$\ACAo$] Each item implies the one below it.  
\begin{itemize}
\item The Jordan decomposition theorem for the unit interval. 
\item $\HBU_{\bv}$, i.e.\ $\HBU$ restricted to $\Psi:[0,1]\di \R^{+}$ of bounded variation.  
\item $\NIN$: there is no injection from $[0,1]$ to $\N$. 
\end{itemize}
Assuming $\IND_{0}$, we may replace the principle $\HBU_{\bv}$ by the following one:
\begin{itemize}
\item For $f:[0,1]\di \R$ of bounded variation, there is $x\in [0,1]$ such that $f$ is continuous \(or: quasi-continuous\) at $x$. 
\end{itemize}
\end{thm}
\begin{proof}
The poeints of discontinuity of a non-decreasing function can be enumerated in $\ACAo$ by \cite{dagsamXII}*{Lemma 3.3}.
Now assume the Jordan decomposition theorem and fix some $\Psi:[0,1]\di \R^{+}$ of bounded variation.  
If $(x_{n})_{n\in \N}$ enumerates all the points of discontinuity of $\Psi$, then the following also covers $[0,1]$.
\[
\cup_{q\in \Q\cap [0,1]} B(q, \Psi(q))\bigcup \cup_{n\in \N}B(x_{n}, \Psi(x_{n})).
\]
The second-order Heine-Borel theorem (provable in $\WKL_{0}$ by \cite{simpson2}*{IV.1}) now yields a finite sub-covering, and $\HBU_{\bv}$ follows.  
Now assume the latter and suppose $Y:[0,1]\di \N$ is an injection.  Define $\Psi:[0,1]\di \N$ as $\Psi(x):=\frac{1}{2^{Y(x)+3}}$.  
As in the proof of Corollary \ref{dokcor}, any finite sub-covering of $\cup_{x\in [0,1]}B(x, \Psi(x))$ must have measure at most $1/2$, a contradiction; $\NIN$ follows and the first part is done.

\smallskip

For the second part of the theorem, we use the first part of the proof, namely that for $f:[0,1]\di \R$ of bounded variation, the points of discontinuity can be enumerated, say by $(x_{n})_{n\in \N}$.
By \cite{simpson2}*{II.4.9}, the unit interval cannot be enumerated, i.e.\ there is $y\in [0,1]$ such that $(\forall n\in \N)(x_{n}\ne y)$.  By definition, $f$ is continuous at $y$.  
For the final implication, consider $\Psi:[0,1]\di \R^{+}$ from the first part of the proof.  The function $\Psi$ is everywhere discontinuous in case $Y$ is an injection; one seems to need $\IND_{0}$ to prove this.  
Similarly, $\Psi$ is not quasi-continuous at any $x\in [0,1]$, and we are done. 
\qed
\end{proof}

In conclusion, basic third-order theorems like Cousin's lemma and Jordan's decomposition theorem are `hard to prove' in terms of conventional comprehension following Remark \ref{proke}.  
Rather than measuring logical strength in terms of the one-dimensional scale provided by conventional comprehension, we propose an alternative \emph{two-dimensional} scale, where the first dimension is based on conventional comprehension and the second dimension is based on the \emph{neighbourhood function principle} $\NFP$ (see e.g.\ \cite{troeleke1}).  Thus, higher-order RM should seek out the minimal axioms needed to prove a given theorem of third-order arithmetic \textbf{and} these minimal axioms are in general a pair, namely a fragment of conventional comprehension and a fragment of $\NFP$.  This two-dimensional picture already exists in set theory where one studies which fragment of $\ZF$ and which fragments of $\AC$ are needed for proving a given theorem of $\ZFC$.  Note that $\ZF$ proves $\NFP$ as the choice functions in the latter are \emph{continuous}.  

\medskip

 \bibliographystyle{natbib}
{
\renewcommand{\clearpage}{} 
\bibliography{bibhope}
}

\end{document}